\newtheorem{theorem}{Theorem}[section]
\newtheorem{lemma}[theorem]{Lemma}
\newtheorem{claim}[theorem]{Claim}
\theoremstyle{definition}
\newtheorem{definition}[theorem]{Definition}
\newcommand{\R}{\mathbb{R}}
\def \deg {{\rm deg}}
\def \leq {\leqslant}
\def \geq {\geqslant}
\def \mod{\pmod}
\let\oldproofname=\proofname
\renewcommand{\proofname}{\rm\bf{\oldproofname}}
\title{Improved upper bounds on Zarankiewicz numbers}
\author{Sara Davies\thanks{School of Mathematics and Physics, The University of Queensland, St Lucia QLD 4067, Australia.}\qquad Peter Gill\thanks{School of Mathematics, Monash University, Melbourne VIC 3800, Australia.} \qquad Daniel Horsley$^\dag$}
\date{}
\begin{document}
\maketitle
\setstretch{1.2}

\begin{abstract}
For positive integers $s,t,m$ and $n$, the Zarankiewicz number $z(m,n;s,t)$ is the maximum number of edges in a subgraph of $K_{m,n}$ that has no complete bipartite subgraph containing $s$ vertices in the part of size $m$ and $t$ vertices in the part of size $n$. The best general upper bound on Zarankiewicz numbers is a bound due to Roman that can be viewed as the optimal value of a simple linear program. Here we show that in many cases this bound can be improved by adding additional constraints to this linear program. This allows us to prove new upper bounds on Zarankiewicz numbers for many small parameter sets. We are also able to establish a new family of closed form upper bounds on $z(m,n;s,t)$ that captures much, but not all, of the power of the new constraints. This bound generalises a recent result of Chen, Horsley and Mammoliti that applied only in the case $s=2$.

\bigskip

\noindent Keywords: Zarankiewicz numbers, Zarankiewicz problem, linear hypergraph
\end{abstract}

\section{Introduction} \label{S:Intro}

For positive integers $s$, $t$, $m$ and $n$, the \emph{Zarankiewicz number} $z(m,n;s,t)$ is usually defined to be the maximum number of edges in a bipartite graph with parts of sizes $m$ and $n$ that has no complete bipartite subgraph containing $s$ vertices in the part of size $m$ and $t$ vertices in the part of size $n$. The problem of finding these numbers was first posed by Zarankiewicz in \cite{Zar}.

A \emph{hypergraph} $H$ consists of a finite set of vertices $V(H)$ and a multiset $E(H)$ of edges where each edge is a nonempty subset of $V$. By considering the bipartite graph in the definition of a Zarankiewicz number as the vertex-edge incidence graph of a hypergraph, $z(m,n;s,t)$ can be equivalently defined as the maximum total degree of a hypergraph with $m$ vertices, $n$ edges, and with the property that each set of $s$ vertices is a subset of at most $t-1$ edges. We call a hypergraph with this property \emph{$(s,t-1)$-linear}, noting that $(2,1)$-linearity corresponds to the usual definition of linearity. Since replacing an edge of size less than $s-1$ with an arbitrary edge of size $s-1$ cannot affect whether a hypergraph is $(s,t-1)$-linear, we can restrict ourselves to hypergraphs in which each edge has size at least $s-1$ in the hypergraph definition of $z(m,n;s,t)$, and it will be convenient to do so throughout.

Roman \cite{Rom} established the following upper bounds on $z(m,n;s,t)$.

\begin{theorem}[\cite{Rom}]\label{T:Rom}
For positive integers $k$, $s$, $t$, $m$ and $n$ with $s \geq 2$ and $k \geq s-1$
\begin{equation}  \label{E:RomanBound}
z(m,n;s,t) \leq \frac{(t - 1) \binom{m}{s}}{ \binom{k}{s-1} }  + \mfrac{ (k+1)(s-1) }{s}n.
\end{equation}
\end{theorem}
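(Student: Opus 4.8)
The plan is to recast the bound as a weighting argument (weak LP duality) and reduce the whole theorem to a one-variable convexity inequality. By the reduction recorded just above, it suffices to bound the total degree of an $(s,t-1)$-linear hypergraph $H$ with $m$ vertices, $n$ edges, and every edge of size at least $s-1$. For $i\in\{s-1,\ldots,m\}$ write $n_i$ for the number of edges of $H$ of size $i$, so that $\sum_i n_i=n$ and the total degree of $H$ equals $\sum_i i\,n_i$. Counting incident pairs consisting of an edge and an $s$-subset of that edge in two ways shows that the total $s$-degree of $H$ equals $\sum_i \binom{i}{s}n_i$; since each of the $\binom{m}{s}$ sets of $s$ vertices is incident with at most $t-1$ edges, this yields the single constraint $\sum_i \binom{i}{s}n_i \le (t-1)\binom{m}{s}$.

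Next I would observe that \eqref{E:RomanBound} follows by weak duality once we exhibit constants $a\ge 0$ and $b$ with $a\binom{i}{s}+b\ge i$ for every integer $i$ with $s-1\le i\le m$: multiplying the $s$-degree constraint by $a$ and the identity $\sum_i n_i=n$ by $b$ and adding gives $\sum_i i\,n_i \le a(t-1)\binom{m}{s}+bn$. The right choice is $a=1/\binom{k}{s-1}$ and $b=(k+1)(s-1)/s$, which reproduces the right-hand side of \eqref{E:RomanBound} verbatim. So the theorem reduces to the elementary inequality
\[
i \;\le\; \frac{\binom{i}{s}}{\binom{k}{s-1}} + \frac{(k+1)(s-1)}{s}, \qquad s-1\le i\le m .
\]

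To prove this I would set $\phi(i)=\binom{i}{s}-\binom{k}{s-1}\,i+\binom{k}{s-1}\frac{(k+1)(s-1)}{s}$ and show $\phi(i)\ge 0$ for every integer $i\ge s-1$. Two short computations with Pascal's rule, using $\binom{k}{s}/\binom{k}{s-1}=(k-s+1)/s$ and $\binom{k+1}{s}/\binom{k}{s-1}=(k+1)/s$, give $\phi(k)=\phi(k+1)=0$, while $\phi(s-1)=\binom{k}{s-1}(s-1)(k+1-s)/s\ge 0$ since $k\ge s-1$. Finally, the second differences of $i\mapsto\binom{i}{s}$ satisfy $\binom{i+1}{s}-2\binom{i}{s}+\binom{i-1}{s}=\binom{i-1}{s-2}\ge 0$, so $\phi$ is a convex integer sequence on $\{s-1,s,\ldots\}$; a convex sequence that vanishes at the two consecutive points $k$ and $k+1$ is non-negative at every other integer point, which gives $\phi\ge 0$ and completes the proof. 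The only part that needs care is the bookkeeping in verifying $\phi(k)=\phi(k+1)=0$; the rest is routine. Tracing equality back through the argument also pins down the tight configurations for the relaxation: equality forces every edge of $H$ to have size $k$ or $k+1$, which is the source of the extremal examples that arise later.
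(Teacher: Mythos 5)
Your proof is correct, and it takes essentially the approach the paper itself indicates: the paper cites the bound from Roman and observes that it is the optimal value of exactly the linear program you set up, and your explicit dual certificate $a=1/\binom{k}{s-1}$, $b=(k+1)(s-1)/s$ together with the convexity check of $\phi$ (vanishing at $i=k$ and $i=k+1$) is the same weighting/duality technique the paper deploys later in Claims~\ref{Cl:f} and~\ref{Cl:g} for Theorem~\ref{T:main}. The verification of $\phi(k)=\phi(k+1)=0$, the value at $i=s-1$, and the second-difference identity $\binom{i+1}{s}-2\binom{i}{s}+\binom{i-1}{s}=\binom{i-1}{s-2}$ are all accurate, so no gaps remain.
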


Roman's proof implicitly shows that these bounds arise as the optimal values of the linear program with variables $n_{s-1},\ldots,n_m$ that maximises $\sum_{i=s-1}^mn_i$ subject to the constraints
\[
\sum_{i=s-1}^m n_i = n  \qquad \text{and} \qquad
\sum_{i=s-1}^m \mbinom{i}{s} n_i \leq (t-1) \mbinom{m}{s}.
\]
If $H$ is an $(s,t-1)$-linear hypergraph on $m$ vertices with no edges of size less than $s-1$ and $n_i$ edges of size $i$ for each $i \in \{s-1,\ldots,m\}$, then it is not difficult to see that the two constraints above hold (for details see Lemma~\ref{L:obeyLP}). Thus the optimal value of this linear program must be an upper bound for $z(m,n;s,t)$.

In \cite{CheHorMam} it was shown that, in the case $s=2$, one of a family of extra constraints could be added to this linear program and that the resulting upper bound improved Roman's bound in some cases. Our aim in this paper is to generalise this upper bound to larger values of $s$. To do this we generalise the additional constraints introduced in \cite{CheHorMam} as follows.

\begin{theorem}\label{T:constraint}
Let $s,t \geq 2$ be integers. Let $H$ be an $(s,t-1)$-linear hypergraph with $m$ vertices and $n$ edges such that $H$ has exactly $n_i$ edges of size $i$ for each $i \in \{s-1,\ldots,m\}$, and no edges of size less than $s-1$. For any positive integers $v$ and $k$ such that $v < s \leq k \leq m$, we have
\[
\frac{1}{ \binom{k-v}{s-v} - \alpha } \sum_{i=s-1}^{k-1} \left( \mbinom{i-v}{s-v} - \alpha \right) \mbinom{i}{v} n_i + \sum_{i=k}^m \mbinom{i}{v} n_i \leq \mbinom{m}{v} \frac{(t-1) \binom{m-v}{s-v} - \alpha}{\binom{k-v}{s-v}}
\]
where $\alpha$ is the integer such that $\alpha \equiv (t-1)\binom{m-v}{s-v} \mod{\binom{k-v}{s-v}}$ and $0 \leq \alpha < \binom{k-v}{s-v}$.
\end{theorem}

When $s=2$, the only choice for $v$ in the above is 1 and we recover the family of constraints used in \cite{CheHorMam}. However, for larger values of $s$, there are many choices for $v$ and we obtain a wider variety of additional constraints. By computing optimal values of the linear program resulting from adding these constraints we are able to improve on the best known upper bounds on $z(m,n;s,t)$ for some small parameter sets. Our computations indicate that by far the most useful choice of $v$ in Theorem~\ref{T:constraint} is $v=s-1$. By theoretically analysing the linear programs with Roman's constraints and one of the additional $v=s-1$ constraints, we are able to obtain a closed form upper bound on $z(m,n;s,t)$.

\begin{theorem}\label{T:main}
Let $s$, $t$, $m$ and $n$ be integers with $2 \leq s \leq m$ and $2 \leq t \leq n$. For any $k$ such that $\max\{2,s^2-2s\} \leq k \leq m$, we have
$z(m,n;s,t) \leq B_k(m,n;s,t)$ where
\[B_k(m,n;s,t) = \frac{\binom{m}{s-1}}{\binom{k}{s-1}} \biggl(\mfrac{(t-1)(m-s+1)}{s}\left(\mfrac{\beta(k+1)}{k-s+1}+1\right) - \mfrac{\alpha \beta(k-s+2)}{k-s+1}\biggr) + \mfrac{(k+1)(s-1-\beta)}{s}n,\]
$\alpha$ is the integer such that $\alpha \equiv (t-1)(m-s+1) \mod{k-s+1}$ and $0 \leq \alpha < k-s+1$, and
\[\beta = \mfrac{(s-1)(k-s+1)-\alpha(s-1)}{(k+1)(k-s+1)-\alpha(s-1)}.\]
\end{theorem}

Note that the denominator of the expression given for $\beta$ is positive since $s \leq k$ and $\alpha < k-s+1$. Despite the complicated appearance of the bound in Theorem~\ref{T:main}, for any fixed $s$, $t$ and $m$, each choice of $k$ simply gives a linear function of $n$. Throughout the paper we often visualise bounds by treating them as functions of $n$ for fixed $s$, $t$ and $m$.

The classical upper bound on Zarankiewicz numbers is given by the K\"{o}vari-S\'{o}s-Tur\'{a}n theorem \cite{Hyl,KovSosTur} which states that $z(m,n;s,t)<(t-1)^{1/s}m n^{1-1/s}+(s-1)n$. More recently, Nikiforov \cite{Nik} proved that, for each $k \in \{0,\ldots,s-2\}$, we have  $z(m,n;s,t)<(t-k-1)^{1/s}m n^{1-1/s}+(s-1)n^{1+k/s}+km$. These bounds improve and generalise both the K\"{o}vari-S\'{o}s-Tur\'{a}n theorem and an important upper bound of F\"{u}redi \cite{Fur}. However, in many cases Roman's bounds, given in Theorem~\ref{T:Rom}, are still the best known upper bounds on Zarankiewicz numbers. When $m$ and $n$ are large in comparison with $s$ and $t$, the best of Roman's bounds is asymptotic to $(t-1)^{1/s}m n^{1-1/s}$ (obtained when $k$ is asymptotic to $(t-1)^{1/s}m n^{-1/s}$). This outperforms the K\"{o}vari-S\'{o}s-Tur\'{a}n theorem, but may be inferior to the best of Nikiforov's bounds (by a constant factor) when $m \geq n^{2/s}$. Roman's bounds are generally superior to Nikiforov's in cases where $s$, $t$, $m$ and $n$ are small.
The only improvements to Roman's and Nikiforov's bounds that we are aware of come from the already-discussed bounds of \cite{CheHorMam} in some cases with $s=2$, from results of \cite{DamHegSzo} in some ``design-like'' cases with $s=t=2$, and from work concentrating on small parameter sets \cite{ColRiaWalRad,GodHenOel,Tan}. A large amount of work has concentrated on showing that these upper bounds are tight, either asymptotically or up to constant factors: see  \cite{AloRonSza,AloMelMubVer,Con,Mor} for example. Finally, there are a few results showing that these upper bounds are exactly achieved in various cases \cite{CheHorMam,Cul,DamHegSzo,Guy} as well as the work on small parameter sets just mentioned.

We arrange the remainder of the paper as follows. In Section~\ref{S:HypergraphRoman} we discuss how Roman's bounds operate. Theorem~\ref{T:constraint} generalises the extra constraints used in \cite{CheHorMam} so they apply to cases where $s>2$; we prove Theorem~\ref{T:constraint} in Section~\ref{S:constraints}. We discuss the computational results for small parameter sets that we obtain using these constraints in Section~\ref{S:computational}. In Section~\ref{S:main} we prove the closed form upper bound given in Theorem~\ref{T:main} by adding a single extra $v=s-1$ constraint of this form to the linear program.  We give some final thoughts and open questions in Section~\ref{S:conclusion}.

\section{Roman's bound \label{S:HypergraphRoman}}


For a given $s$, $t$ and $m$, the function of $n$ given by the least of Roman's bounds is piecewise linear and has a critical point $(t-1)\binom{m}{s}/\binom{\ell}{s}$ with critical value $\ell(t-1)\binom{m}{s}/\binom{\ell}{s}$ for each integer $\ell \geq s$. We refer to these critical points as \emph{Roman points} throughout the rest of the paper. At any (integral) Roman point, it is known that $z(m,n;s,t)$ achieves the bound if and only if there exists an \emph{$s$-$(m,k,t-1)$-design}: a $k$-uniform hypergraph on $m$ vertices in which any set of $s$ vertices is a subset of exactly $t-1$ edges. Furthermore, it is known \cite{Cul} that \eqref{E:RomanBound} with $k=s-1$ holds with equality for all $n \geq (t-1)\binom{m}{s}$. That is, whenever $n$ is at least the value of the largest Roman point, the best of Roman's bounds is in fact achieved by $z(m,n;s,t)$. See Figure~\ref{F:Roman} for a visualisation of a segment of the piecewise linear function given by the best of Roman's bounds. For an interesting generalisation of the result of \cite{Cul} to a multipartite variant of the Zarankiewicz problem, see \cite{MulNag}.

\begin{figure}[h!]
    \centering
    \includegraphics[width=0.6\linewidth]{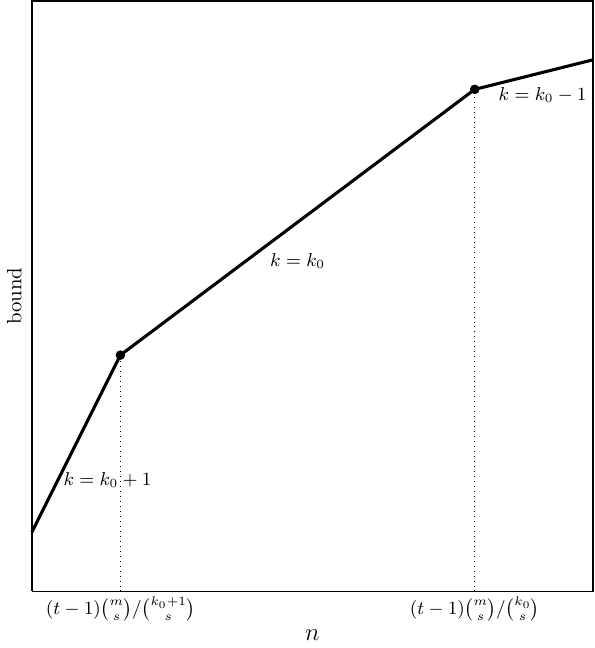}
    \caption{A portion of the piecewise linear function given by the best of Roman’s
bounds: the equation of each linear segment is given by the right hand side of \eqref{E:RomanBound} with $k \in \{k_0-1,k_0,k_0+1\}$ chosen as indicated where $k_0 \geq s$. Note that the change in gradients has been exaggerated for clarity.}
    \label{F:Roman}
\end{figure}

Recall that in \cite{CheHorMam} it was shown that, in the case $s = 2$, adding one family of extra constraints to Roman's linear program can give a better upper bound. The improvements occur for values of $n$ close to some Roman
points.

\section{Proof of Theorem~\ref{T:constraint} \label{S:constraints}}

We first introduce a few more definitions related to hypergraphs. Let $H$ be a hypergraph and consider a subset of its vertices, $X \subseteq V(H)$. For a given edge $E \in E(H)$, we say that $X$ and $E$ are \emph{incident} with each other if $X \subseteq E$.  The \emph{degree} of $X$, denoted $\deg_H(X)$, is the number of edges of $H$ incident with $X$. The \emph{total $s$-degree} of $H$ is the sum of $\deg_H(S)$ over all $s$-subsets $S$ of $V(H)$. The total $1$-degree is simply the total degree.

The special case of Theorem~\ref{T:constraint} where $s=2$ and $v=1$ was proved in \cite[Lemma 2.4]{CheHorMam}, using a notion of the deficiency of a hyperedge. We prove Theorem~\ref{T:constraint} along similar lines, but we require a generalised definition of deficiency and some more complication in the proof.

\begin{definition}
For an edge $E$ of a hypergraph and positive integers  $k,s,v$ such that $v < s$, we define the \emph{$(k,s,v)$-deficiency} of $E$ to be 0 if $|E| \geq k$ and otherwise to be
\begin{equation} \nonumber
\mbinom{k-v}{s-v} - \mbinom{|E| - v}{s-v}.
\end{equation}
\end{definition}

For an edge $E$ of size less than $k$ and a fixed $v$-subset $U$ of $E$, the $(k,s,v)$-deficiency of $E$ counts the number of incidences with $s$-supersets of $U$ that would be gained if $E$ were replaced by a superset of $E$ with size $k$.
If the values of $k$, $s$, and $v$ are clear from context, we may simply write \emph{deficiency}.

We are now ready to prove Theorem~\ref{T:constraint}. In fact, we prove a lemma that states that the numbers of edges of each size in a hypergraph $H$ with certain properties must obey linear constraints given by \eqref{E:LP1}, \eqref{E:LP2} and \eqref{E:LPgen}. The constraints \eqref{E:LP1} and \eqref{E:LP2} are those of Roman's linear program, which we already discussed, and we state and prove them here for convenience. The new family of constraints given by \eqref{E:LPgen} are exactly those claimed by Theorem~\ref{T:constraint}. Throughout the paper we let $\alpha$ be as defined in Theorem~\ref{T:constraint}, where the values of $m$, $s$, $t$, $v$ and $k$ will always be clear from context.

\begin{lemma}\label{L:obeyLP}
Let $H$ be an $(s,t-1)$-linear hypergraph $H$ of order $m$ with $n$ edges, each of which has size at least $s-1$. If $H$ has exactly $n_i$ edges of size $i$ for each $i \in \{s-1,\ldots,m\}$, then $(n_{s-1},\ldots,n_m)$ satisfies
\begin{align}
\sum_{i=s-1}^m n_i &= n \label{E:LP1} \\
\sum_{i=s-1}^m \mbinom{i}{s} n_i &\leq (t-1) \mbinom{m}{s} \label{E:LP2} \\
\intertext{and, for all positive integers $v$ and $k$ such that $v<s \leq k \leq m$,}
\frac{1}{ \binom{k-v}{s-v} - \alpha } \sum_{i=s-1}^{k-1} \left( \mbinom{i-v}{s-v} - \alpha \right) \mbinom{i}{v} n_i + \sum_{i=k}^m \mbinom{i}{v} n_i &\leq \mbinom{m}{v} \frac{(t-1) \binom{m-v}{s-v} - \alpha}{\binom{k-v}{s-v}}.\label{E:LPgen}
\end{align}
\end{lemma}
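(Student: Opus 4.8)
The plan is to verify the three families of constraints one at a time. Equations \eqref{E:LP1} and \eqref{E:LP2} are immediate: \eqref{E:LP1} just says the edges of each size partition $E(H)$, and \eqref{E:LP2} follows by counting incidences between $s$-subsets of $V(H)$ and edges. Indeed, $\sum_{i=s-1}^m \binom{i}{s} n_i$ counts the pairs $(S,E)$ with $S$ an $s$-subset of $V(H)$ and $S \subseteq E$ (an edge of size $i$ contains $\binom{i}{s}$ such $s$-subsets), while $(s,t-1)$-linearity says each $S$ is incident with at most $t-1$ edges, and there are $\binom{m}{s}$ choices of $S$.

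The real work is \eqref{E:LPgen}. Fix $v$ and $k$ with $v<s\le k\le m$, and fix a $v$-subset $U$ of $V(H)$. Consider the edges incident with $U$, and for each such edge $E$ look at its $(k,s,v)$-deficiency $\binom{k-v}{s-v}-\binom{|E|-v}{s-v}$ when $|E|<k$, and $0$ otherwise. By the interpretation recorded after the definition of deficiency, if we enlarge each edge incident with $U$ to a superset of size (at least) $k$, the number of incidences between $U$-containing $s$-subsets and edges increases by exactly the total deficiency over all edges through $U$; since that number can only ever be $(t-1)\binom{m-v}{s-v}$ at most after enlargement, we get
\[
\sum_{E\ni U}\Bigl(\mbinom{k-v}{s-v}-\mbinom{|E|-v}{s-v}\Bigr)^{+}+\bigl(\text{incidences of $s$-sets through $U$ with edges of size $\ge s$}\bigr)\le (t-1)\mbinom{m-v}{s-v},
\]
where $x^+=\max\{x,0\}$; rearranging, $\binom{k-v}{s-v}\deg_H(U)-\sum_{E\ni U,\,|E|<k}\binom{|E|-v}{s-v}-\sum_{E\ni U,\,|E|\ge k}\binom{|E|-v}{s-v}\le (t-1)\binom{m-v}{s-v}$, i.e.
\[
\mbinom{k-v}{s-v}\deg_H(U)\le (t-1)\mbinom{m-v}{s-v}+\sum_{E\ni U}\min\Bigl\{\mbinom{|E|-v}{s-v},\mbinom{k-v}{s-v}\Bigr\}.
\]
The left-hand side is an integer, so we may replace the right-hand side by its floor; since $(t-1)\binom{m-v}{s-v}\equiv\alpha\pmod{\binom{k-v}{s-v}}$ and the sum on the right is a sum of terms each lying in $\{0,1,\dots,\binom{k-v}{s-v}\}$, a short divisibility argument lets us sharpen the inequality to
\[
\mbinom{k-v}{s-v}\deg_H(U)\le (t-1)\mbinom{m-v}{s-v}-\alpha+\sum_{E\ni U}\Bigl(\text{if }|E|\ge k\text{ then }\mbinom{k-v}{s-v}\text{ else }\max\{\mbinom{|E|-v}{s-v},\alpha\}\Bigr).
\]
I expect this rounding step — extracting the extra $-\alpha$ and simultaneously replacing $\binom{|E|-v}{s-v}$ by $\max\{\binom{|E|-v}{s-v},\alpha\}$ for the small edges — to be the main obstacle, exactly as in the $s=2$ case of \cite{CheHorMam}; the point is that each small edge contributes at least $\alpha$ towards the slack whenever it contributes anything, so we never lose more than we can afford.

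Finally I would sum the displayed inequality over all $v$-subsets $U$ of $V(H)$. On the left, $\sum_U\deg_H(U)=\sum_{i=s-1}^m\binom{i}{v}n_i$ since an edge of size $i$ contains $\binom{i}{v}$ vertex-$v$-subsets. On the right, the constant term becomes $\binom{m}{v}\bigl((t-1)\binom{m-v}{s-v}-\alpha\bigr)$, and the edge-sum, again using that an edge $E$ is counted once for each of its $\binom{|E|}{v}$ $v$-subsets, becomes $\binom{k-v}{s-v}\sum_{i=k}^m\binom{i}{v}n_i+\sum_{i=s-1}^{k-1}\max\{\binom{i-v}{s-v},\alpha\}\binom{i}{v}n_i$. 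Dividing through by $\binom{k-v}{s-v}$ and using $\max\{\binom{i-v}{s-v},\alpha\}=\alpha+\bigl(\binom{i-v}{s-v}-\alpha\bigr)^{+}$ — here one checks that for $s-1\le i\le k-1$ the quantity $\binom{i-v}{s-v}-\alpha$ has the sign that makes $\frac{1}{\binom{k-v}{s-v}-\alpha}(\binom{i-v}{s-v}-\alpha)$ the correct coefficient, the case $i=s-1$ giving the extreme value — rearranges precisely into \eqref{E:LPgen}. It remains only to double-check the boundary indices and the degenerate situation $\alpha=0$, where the claimed inequality reduces to a cleaner form and the division by $\binom{k-v}{s-v}-\alpha$ is still valid since $\binom{k-v}{s-v}>0$.
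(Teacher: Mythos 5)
Your handling of \eqref{E:LP1} and \eqref{E:LP2} is correct and matches the paper, but your argument for \eqref{E:LPgen} breaks at its very first step. Enlarging the edges through $U$ to size $k$ can destroy $(s,t-1)$-linearity, so you may not bound the post-enlargement incidence count by $(t-1)\binom{m-v}{s-v}$. Concretely, your first display asserts $\sum_{E\ni U}\max\bigl\{\binom{|E|-v}{s-v},\binom{k-v}{s-v}\bigr\}\le(t-1)\binom{m-v}{s-v}$, and this is false: let $H$ consist of a large number $N$ of edges of size $s-1$ containing $U$ (repeats are allowed since $E(H)$ is a multiset, and such edges contain no $s$-subsets, so $H$ is $(s,t-1)$-linear); then the left side is $N\binom{k-v}{s-v}$, which is unbounded. (Your ``rearranging'' and ``i.e.'' displays are in fact not equivalent to the first display or to each other, and all three fail on this same example.) The deficiency must enter as slack on the \emph{other} side: starting from the true count $\sum_{E\ni U}\binom{|E|-v}{s-v}\le(t-1)\binom{m-v}{s-v}$ and capping each edge's contribution below by replacing $\binom{|E|-v}{s-v}$ with $\binom{k-v}{s-v}$ for small edges, one gets $\binom{k-v}{s-v}\deg_H(U)\le(t-1)\binom{m-v}{s-v}+\tau(U)$, where $\tau(U)$ is the total $(k,s,v)$-deficiency of the edges through $U$; this is the inequality the paper works with.

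The later steps also do not go through as stated. Your ``sharpened'' per-$U$ inequality (with $\max\{\binom{|E|-v}{s-v},\alpha\}$ for small edges) fails on the same example: its left side is $N\binom{k-v}{s-v}$ while its right side is $(t-1)\binom{m-v}{s-v}-\alpha+N\alpha$, and $\alpha<\binom{k-v}{s-v}$. The correct sharpening is $\deg_H(U)\le c+\tau(U)/\bigl(\binom{k-v}{s-v}-\alpha\bigr)$ with $c=\bigl((t-1)\binom{m-v}{s-v}-\alpha\bigr)/\binom{k-v}{s-v}$, proved by splitting into the cases $\deg_H(U)\le c$ and $\deg_H(U)\ge c+1$; the essential effect of the rounding is that the deficiency term gets divided by $\binom{k-v}{s-v}-\alpha$ rather than being adjusted edge-by-edge. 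Finally, even granting your sharpened inequality, summing it over all $v$-subsets $U$ cannot yield \eqref{E:LPgen}: every edge of size $i\ge k$ contributes $\binom{k-v}{s-v}\binom{i}{v}$ to both sides, so these terms cancel and the summed inequality involves only $n_{s-1},\ldots,n_{k-1}$, placing no constraint at all on $n_k,\ldots,n_m$, whereas in \eqref{E:LPgen} the terms $\sum_{i=k}^m\binom{i}{v}n_i$ appear with coefficient $1$. Summing the paper's per-$U$ bound instead, together with $\sum_U\tau(U)=\sum_{i=s-1}^{k-1}\bigl(\binom{k-v}{s-v}-\binom{i-v}{s-v}\bigr)\binom{i}{v}n_i$, does rearrange exactly into \eqref{E:LPgen}.
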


\begin{proof}
We have that \eqref{E:LP1} holds because $H$ has $n$ edges. An edge of size $i$ contributes $\binom{i}{s}$ to the total $s$-degree of $H$ and by definition the total $s$-degree of an $(s,t-1)$-linear hypergraph cannot exceed $(t-1)\binom{m}{s}$. Thus \eqref{E:LP2} holds.

Fix $v \in \{1,\ldots,s-1\}$, $k \in \{s,\ldots,m\}$ and a set $X$ of $v$ vertices. Let $n_i(X)$ denote the number of edges of size $i$ incident with $X$. Let $c$ be the integer such that $c\binom{k-v}{s-v} = (t-1) \binom{m-v}{s-v} - \alpha$. Recall that $(t-1) \binom{m-v}{s-v} - \alpha \equiv 0 \mod{\binom{k-v}{s-v}}$, hence $c$ is indeed an integer.

We will next show that
\begin{align}
\deg_H(X) &\leq c + \frac{\tau(X)}{\binom{k-v}{s-v} - \alpha}, \label{bound deg alpha bottom}
\end{align}
where we define $\tau(X)$ to be the sum of the $(k,s,v)$-deficiencies of all edges incident to $X$. For this purpose we can assume $\deg_H(X) \geq c+1$ for otherwise \eqref{bound deg alpha bottom} holds trivially.

The number of $s$-subsets of $V$ that contain $X$ is $\binom{m-v}{s-v}$ and each of these is incident with at most $t-1$ edges of $H$. On the other hand, an edge of size $i$ containing $X$ is incident with exactly $\binom{i-v}{s-v}$ of these $s$-subsets for each $i \in \{s-1,\ldots,m\}$ (in particular, an edge of size $s-1$ is incident with none of them). Thus, by the definition of $c$,
\begin{align*}
 c \mbinom{k-v}{s-v} + \alpha = (t-1) \mbinom{m-v}{s-v} & \geq \sum_{i=s-1}^m \mbinom{i-v}{s-v} n_i(X) \\
&\geq \sum_{i=k}^m \mbinom{k-v}{s-v} n_i(X) + \sum_{i=s-1}^{k-1} \mbinom{i-v}{s-v} n_i(X) \\
&= \mbinom{k-v}{s-v} \sum_{i=s-1}^m n_i(X) + \sum_{i=s-1}^{k-1} \left(\mbinom{i-v}{s-v} - \mbinom{k-v}{s-v}\right)n_i(X) \\
&= \mbinom{k-v}{s-v} \deg_H(X) - \tau(X).
\end{align*}
Thus, using $\binom{k-v}{s-v} \geq 1$  and then our assumption that $\deg_H(X) \geq c + 1$,
\[
\tau(X) \geq \mbinom{k-v}{s-v} \left(\deg_H(X) - c \right) - \alpha
\geq \left(\mbinom{k-v}{s-v} - \alpha\right)\left(\deg_H(X) - c\right)\]
and rearranging gives \eqref{bound deg alpha bottom} as desired.

Now, using \eqref{bound deg alpha bottom},
\begin{equation} \label{E:totalvDeg}
\sum_{i=s-1}^m \mbinom{i}{v} n_i = \sum_{X \in {\scriptscriptstyle\binom{V}{v}}} \deg_H(X) \leq \sum_{X \in {\scriptscriptstyle\binom{V}{v}}} \left( c + \frac{\tau(X)}{\binom{k-v}{s-v} - \alpha} \right) = \mbinom{m}{v}c + \frac{1}{ \binom{k-v}{s-v} - \alpha } \sum_{X \in {\scriptscriptstyle\binom{V}{v}}}\tau(X),
\end{equation}
where $V$ denotes the vertex set $V(H)$. Observe that, by the definition of $\tau$,
\begin{equation} \label{E:totalDeficiency}
\sum_{X \in {\scriptscriptstyle\binom{V}{v}}}\tau(X) = \sum_{i=s-1}^{k-1} \left(\mbinom{ k-v }{ s-v } - \mbinom{ i - v }{ s - v }\right) \mbinom{i}{v} n_i.
\end{equation}
Substituting \eqref{E:totalDeficiency} into \eqref{E:totalvDeg} and rearranging we obtain
\[
\sum_{i=s-1}^m \mbinom{i}{v} n_i - \frac{1}{ \binom{k-v}{s-v} - \alpha} \sum_{i=s-1}^{k-1} \left( \mbinom{ k-v }{ s-v } - \mbinom{ i-v }{ s-v } \right) \mbinom{i}{v} n_i \leq \mbinom{m}{v} c.\]
Substituting for $c$ and consolidating the terms corresponding to $i \in \{s-1,\ldots,k-1\}$ from both sums yields \eqref{E:LPgen}.
\end{proof}

\section{Computational results \label{S:computational}}

Suppose there exists an $(s,t-1)$-linear hypergraph with $m$ vertices and $n$ edges which has exactly $n_i$ edges of size $i$ for each $i \in \{s-1,\ldots,m\}$, and no edges of size less than $s-1$. In Lemma~\ref{L:obeyLP} we have established that the inequalities \eqref{E:LP1}, \eqref{E:LP2} and \eqref{E:LPgen} hold for all $v \in \{1,\ldots,s-1\}$ and $k \in \{s,\ldots,m\}$.

For positive integers $s$, $t$, $m$ and $n$, let $\mathcal{E}(m,n;s,t)$ be the linear program on nonnegative real variables $n_{s-1}, \dots, n_m$ that aims to maximise $\sum_{i=s-1}^m in_i$ subject to constraints \eqref{E:LP1}, \eqref{E:LP2} and, for all $v \in \{1,\ldots,s-1\}$ and $k \in \{s,\ldots,m\}$, \eqref{E:LPgen}. By our discussion above in Sections \ref{S:HypergraphRoman} and \ref{S:constraints}, the optimal value of $\mathcal{E}(m,n;s,t)$ is an upper bound on $z(m,n;s,t)$. For small parameters $(m,n;s,t)$ it is possible to compute optimal values for $\mathcal{E}(m,n;s,t)$. This section outlines the results we obtained from doing so. The linear programs $\mathcal{E}(m,n;s,t)$ from Lemma~\ref{L:obeyLP} were implemented in the Python programming language using the linear program solver GLPK available through SageMath \cite{Sage}.

We observed that very often the optimal value of $\mathcal{E}(m,n;s,t)$ is matched by the optimal value of the simpler linear program, denoted $\mathcal{E}^*(m,n;s,t)$, whose constraints are \eqref{E:LP1}, \eqref{E:LP2} and, for $v=s-1$ and all $k \in \{s,\ldots,m\}$, \eqref{E:LPgen}. For example, Table~\ref{table:s-1 vs all} shows that for $s \in \{3,4,5\}$, $t \in \{s, \ldots, 5\}$, $m \in \{s, \ldots, 60\}$ and $n \in \{m, \ldots, 60\}$, the simpler linear program $\mathcal{E}^*(m,n;s,t)$ and has the same optimal value as the program $\mathcal{E}(m,n;s,t)$ in about 97.5\% of the cases (in contrast, using a different single choice for $v$ achieves the same optimal value as $\mathcal{E}(m,n;s,t)$ in only approximately $30\%$ of the cases).  This motivates the choice of linear program (restricted to $v=s-1$) that we use to establish Theorem~\ref{T:main} in Section~\ref{S:main} below. Theorem~\ref{T:main} is itself less powerful than $\mathcal{E}^*(m,n;s,t)$ because it only uses \eqref{E:LPgen} with $v=s-1$ and the single `best' choice of $k$, whereas $\mathcal{E}^*(m,n;s,t)$ uses \eqref{E:LPgen} with $v=s-1$ and all $k \in \{s,\ldots,m\}$ simultaneously.

\begin{table}[h!]
\begin{center}
\begin{tabular}{c|c|c|c|c}
\centering
$s$ & $t$ & $\#$ cases  & $\#$ where $\mathcal{E}^*(m,n;s,t)$ matches
& $\#$ where Theorem~\ref{T:main} matches
\\ \hline
3 & 3 & 1711 & 1697 & 1334 \\
3& 4 & 1711	& 1696 & 1354\\
3 & 5 & 1711 & 1693 & 1455 \\
4& 4 & 1653 & 1597 & 618 \\
4 & 5 & 1653 & 1565 & 797 \\
5 & 5 & 1596 & 1538 & 209
\end{tabular}
\caption{Comparing Theorem~\ref{T:main}, $\mathcal{E}^*(m,n;s,t)$ and $\mathcal{E}(m,n;s,t)$ for $m \in \{s, \ldots, 60\}$ and $n \in \{m, \ldots, 60\}$.  The third column gives the total number of cases, the fourth column gives the number of cases where the simpler linear program (with $v=s-1$) $\mathcal{E}^*(m,n;s,t)$ has the same optimal value as $\mathcal{E}(m,n;s,t)$, and the last column gives the number of cases in which that same optimal value is given by the closed form bound in Theorem~\ref{T:main}.}
\label{table:s-1 vs all}
\end{center}
\end{table}

Tables~\ref{table-3-3}-\ref{table-5-5} in Appendix~\ref{A:tables} show where Theorem~\ref{T:main} or optimal values from the linear program $\mathcal{E}(m,n;s,t)$ improve on the best previously known bound for $(s,t) = (3,3)$, $(3,4)$, $(3,5)$, $(4,4)$, $(4,5)$, and $(5,5)$ respectively. Here, the best known upper bound is often the Roman bound, but for small values of $n$ and $m$, when $s=t$, Tan \cite{Tan} catalogued the best known values, indicating which are known to be exact. For all the parameter sets covered by these tables we confirmed that the Roman bound is at least as good as the bound of Nikiforov \cite{Nik}.

We referred to \cite{Tan} for $(s,t) = (3,3)$ and $(s,t) = (4,4)$ when developing Table~\ref{table-3-3} and Table~\ref{table-4-4}, respectively, but everywhere we made an improvement, the quoted value in the table of \cite{Tan} was the Roman bound.  We use $x^{*}$ to indicate that the full linear program is needed to obtain the bound $x$, either because the bound of Theorem~\ref{T:main} cannot be used or because Theorem~\ref{T:main} does not give the best result. Most of the improved bounds are an improvement by 1, except for the bold values, which improve the previous (Roman) bound by 2.

The closed form nature of the bound of Theorem~\ref{T:main} allows us to use it for much larger (and much more unbalanced) values of $m$ and $n$. To give some idea of its behaviour in these regimes, in Appendix~\ref{A:plots} we give a number of plots that show the improvement (if any) that Theorem~\ref{T:main} provides over the Roman bound given by Theorem~\ref{T:Rom}. In each plot the values of $s$, $t$ and $m$ are fixed, the value of $n$ is plotted on the horizontal axis and the improvement on the vertical axis. We consider $(s,t) \in \{(3,3),(3,4),(3,5),(4,4),(4,5),(5,5)\}$ and $m \in \{21,22,23,24\}$ where the latter was chosen somewhat arbitrarily as a range to showcase the different behaviour for different values of $m$. For visual clarity, in calculating the improvement, we treat the bounds as real functions of a real variable $n$ and do not take the floor of either. For each plot, there are no improvements for values of $n$ larger than those pictured. As was the case for the $s=2$ bound in \cite{CheHorMam}, the largest improvements tend to be for values of $n$ close to a Roman point. We indicate the Roman points with light vertical lines. We do not claim that the full range of behaviour of the bound of Theorem~\ref{T:main} is captured by the example choices of $s$, $t$ and $m$ for which we give plots. However, these examples do serve to demonstrate the variety of the behaviour. Again, the Roman bounds are always at least as good as those of Nikiforov for the values covered by these plots.

\section{Proof of Theorem~\ref{T:main} \label{S:main}}

In this section we prove Theorem~\ref{T:main}. The calculations become messy and we regularly employed symbolic computational software to perform tedious simplifications. Throughout this section we let $\alpha$ and $\beta$ be as defined in Theorem 1.2 (this is consistent with our previous definition of $\alpha$ since we are implicitly taking $v=s-1$ here). For convenience, we restate Theorem~\ref{T:main} below.\\

\noindent \textup{\textbf{Theorem~\ref{T:main}.}} \emph{
Let $s$, $t$, $m$ and $n$ be integers with $2 \leq s \leq m$ and $2 \leq t \leq n$. For any $k$ such that $\max\{2,s^2-2s\} \leq k \leq m$, we have
$z(m,n;s,t) \leq B_k(m,n;s,t)$ where
\[B_k(m,n;s,t) = \frac{\binom{m}{s-1}}{\binom{k}{s-1}} \biggl(\mfrac{(t-1)(m-s+1)}{s}\left(\mfrac{\beta(k+1)}{k-s+1}+1\right) - \mfrac{\alpha \beta(k-s+2)}{k-s+1}\biggr) + \mfrac{(k+1)(s-1-\beta)}{s}n,\]
$\alpha$ is the integer such that $\alpha \equiv (t-1)(m-s+1) \mod{k-s+1}$ and $0 \leq \alpha < k-s+1$, and
\[\beta = \mfrac{(s-1)(k-s+1)-\alpha(s-1)}{(k+1)(k-s+1)-\alpha(s-1)}.\]
}

\begin{proof}
First, note that by specialising \eqref{E:LPgen} to the case $v=s-1$ we obtain
\begin{equation}\label{E:LP3}
 \sum_{i=s-1}^{k-1} \mfrac{i-s+1 - \alpha}{ k-s+1 - \alpha } \mbinom{i}{s-1} n_i + \sum_{i=k}^m \mbinom{i}{s-1} n_i \leq \mbinom{m}{s-1} \mfrac{(t-1)(m-s+1) - \alpha}{k-s+1}.
\end{equation}

For each integer $k \in \{s,\ldots,m\}$, let $\mathcal{E}^*_k(m,n;s,t)$ be the linear program on nonnegative real variables $n_{s-1},\ldots,n_m$ that aims to maximise $\sum_{i=s-1}^m in_i$ subject to \eqref{E:LP1}, \eqref{E:LP2} and \eqref{E:LP3}.

Let
\begin{align}
A &= \mfrac{(k+1)(s-1-\beta)}{s} \label{E:LP1factor} \\[1mm]
B &= \bigl(1-(s-1)\beta\bigr)\mbinom{k}{s-1}^{-1} \label{E:LP2factor} \\[1mm]
C &= (s-1)\beta\mbinom{k}{s-2}^{-1} = (k-s+2)\beta\mbinom{k}{s-1}^{-1}. \label{E:LP3factor}
\end{align}\medskip

We note that $\beta$ is a decreasing function of $\alpha$ and hence, since $0 \leq \alpha \leq k-s$,
\[\mfrac{s-1}{(k-s)(k-s+2)+k+1} \leq \beta \leq \mfrac{s-1}{k+1}.\]
So it is easy to see that $A$ and $C$ are nonnegative. Using the upper bound on $\beta$ we see that we are guaranteed that $B$ will also be nonnegative when $k \geq s^2-2s$.

We are interested in the inequality given by adding $A$ times \eqref{E:LP1}, $B$ times \eqref{E:LP2} and $C$ times \eqref{E:LP3}. Simplification reveals that this inequality is exactly
\begin{equation}\label{E:fgIneq}
\sum_{i=s-1}^{k-1}f(i)n_i+\sum_{i=k}^{m}g(i)n_i \leq B_k(m,n;s,t)
\end{equation}
where $f:[s-1,k-1] \rightarrow \R$ and $g:[k,m]\rightarrow \R$ are functions given by
\begin{align*}
f(i) &=\mfrac{(k+1)(s-1-\beta)}{s} + \frac{\binom{i}{s-1}}{\binom{k}{s-1}}\left(\mfrac{(1-(s-1)\beta)(i-s+1)}{s} + \mfrac{\beta (k-s+2)(i-s+1-\alpha)}{(k-s+1-\alpha)}\right) \\[2mm]
g(i) &=\mfrac{(k+1)(s-1-\beta)}{s}+\frac{\binom{i}{s-1}}{\binom{k}{s-1}}\left(\mfrac{(1-(s-1)\beta)(i-s+1)}{s}+\beta(k-s+2)\right).
\end{align*}

Thus, to show that $B_k(m,n;s,t)$ is an upper bound on the optimal value of $\mathcal{E}^*_k(m,n;s,t)$, it suffices to show that the objective function of $\mathcal{E}^*_k(m,n;s,t)$ is bounded above by the left hand side of \eqref{E:fgIneq}. This will be achieved if we can show that $f(i) \geq i$ for each $i \in \{s-1,\ldots,k-1\}$ and $g(i) \geq i$ for each $i \in \{k,\ldots,m\}$. We accomplish this in Claims~\ref{Cl:f} and \ref{Cl:g} below. There we will see that, in particular, $f(k-1)=k-1$ and $g(i)=i$ for $i \in \{k,k+1\}$. In fact, $A$, $B$ and $C$ were chosen to be the unique coefficients for which this property would hold.
\end{proof}

\begin{claim}\label{Cl:f}
Let $f$ be as defined in the proof of Theorem~$\ref{T:main}$. Then $f(k-1)=k-1$ and $f(i) \geq i$ for each $i \in \{s-1,\ldots,k-2\}$.
\end{claim}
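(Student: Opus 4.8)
The plan is to analyse $f(i) - i$ as a function of $i$ treated as a real variable on $[s-1, k-1]$, showing it is nonnegative throughout and vanishes at $i = k-1$. First I would rewrite $f(i) - i$ by grouping terms over the common factor $\binom{i}{s-1}/\binom{k}{s-1}$: we have
\[
f(i) - i = \mfrac{(k+1)(s-1-\beta)}{s} - i + \frac{\binom{i}{s-1}}{\binom{k}{s-1}}\left(\mfrac{(1-(s-1)\beta)(i-s+1)}{s} + \mfrac{\beta(k-s+2)(i-s+1-\alpha)}{k-s+1-\alpha}\right).
\]
The first step is the boundary computation: substitute $i = k-1$ and verify $f(k-1) = k-1$. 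Since $\binom{k-1}{s-1}/\binom{k}{s-1} = (k-s+1)/k$, this is a finite identity in $s,k,\alpha,\beta$; after substituting the definition of $\beta$ it should collapse to $0$, and I would confirm this with the symbolic software the authors already use. Establishing this boundary value is important both as a sanity check and because it tells us that to prove $f(i) \geq i$ on the whole interval it suffices to understand the sign of $f(i) - i$ for $i < k-1$, e.g. via convexity or monotonicity of an auxiliary quantity.

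The core of the argument is a convexity/sign analysis. I expect that the cleanest route is to show that $f(i) - i$, or perhaps $(f(i) - i)\binom{k}{s-1}/\binom{i}{s-1}$ after clearing the binomial, behaves well enough that nonnegativity on $[s-1,k-1]$ follows from checking finitely many points together with a one-sided derivative condition. Concretely: the term $\binom{i}{s-1}$ is a polynomial in $i$ of degree $s-1$ that is nonnegative and increasing on $[s-1,\infty)$; the bracketed factor multiplying it is linear in $i$ with positive leading coefficient (here I use that $1-(s-1)\beta \geq 0$, which is exactly where the hypothesis $k \geq s^2 - 2s$ enters, and that $\beta(k-s+2)/(k-s+1-\alpha) > 0$), so the whole product is a nonnegative increasing convex-ish function on the relevant range, while $-i + \tfrac{(k+1)(s-1-\beta)}{s}$ is affine. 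A natural plan is therefore: (i) show $g(i) - i$ considerations aside, that $f(i) - i$ is convex on $[s-1, k-1]$ — differentiate twice and check the second derivative is nonnegative, again using $1-(s-1)\beta \geq 0$; (ii) since a convex function on an interval attains its maximum at an endpoint, deduce that proving $f(i) \geq i$ reduces to the two endpoints, but that gives an upper bound on where the max is, not the lower bound we want. So instead I would use convexity the other way: a convex function lies below the chord, hence \emph{above} is not automatic — rather, I would show $f(i)-i$ is convex and that its value and the sign of its derivative at $i = k-1$ force nonnegativity moving left. Precisely, if $f(i)-i$ is convex on $[s-1,k-1]$, vanishes at $k-1$, and has nonpositive derivative at $k-1$ (i.e. $f'(k-1) \leq 1$), then it is nonnegative on all of $[s-1, k-1]$: convexity means the derivative is increasing, so $f'(i) - 1 \leq 0$ for $i \leq k-1$, hence $f(i) - i$ is nonincreasing on $[s-1,k-1]$ and therefore at least its value at the right endpoint, which is $0$.

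So the concrete steps are: (1) verify $f(k-1) = k-1$ symbolically; (2) compute $f'(i)$ and show $f'(k-1) \leq 1$ — this is another finite identity/inequality in $s,k,\alpha$ after substituting $\beta$, and I anticipate it will actually be an equality or near-equality given the authors' remark that $A,B,C$ were tuned so that also $g(k) = k$ and $g(k+1) = k+1$; (3) show $f'' \geq 0$ on $[s-1,k-1]$, reducing to the sign of $1 - (s-1)\beta$ times a manifestly positive binomial-derivative expression, and invoke $k \geq s^2 - 2s$ together with the displayed bound $\beta \leq (s-1)/(k+1)$ to get $(s-1)\beta \leq (s-1)^2/(k+1) \leq 1$. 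I would also separately note the trivial edge cases $s = 2$ (where the interval and the claim degenerate) and small $k$ allowed by $k \geq \max\{2, s^2-2s\}$. The main obstacle I anticipate is step (3): the second derivative of $f$ involves $\tfrac{d^2}{di^2}\binom{i}{s-1}$, which is not sign-definite on all of $[s-1,\infty)$ for $s \geq 4$ (the falling factorial $i(i-1)\cdots(i-s+2)$ has inflection behaviour), so plain convexity of $f(i)-i$ may fail and I would instead need the more robust argument of writing $f(i) - i = \binom{i}{s-1}/\binom{k}{s-1}\cdot L(i) - (i - \tfrac{(k+1)(s-1-\beta)}{s})$ with $L$ linear and increasing, and comparing term by term against the known root at $i = k-1$; alternatively, multiply through by $\binom{k}{s-1}/\binom{i}{s-1} > 0$ and analyse the resulting expression, whose non-polynomial part $\binom{k}{s-1}(i - c)/\binom{i}{s-1}$ is monotone in a controllable way. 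This term-by-term / factored comparison, rather than brute-force convexity, is the fallback I would develop carefully.
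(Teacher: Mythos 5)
Your boundary computation $f(k-1)=k-1$ matches the paper, and your reduction ``convexity of $f(i)-i$ on $[s-1,k-1]$ together with $f'(k-1)\leq 1$ implies the claim'' is logically sound. The problem is that the convexity premise is false in general, so step (3), the core of your argument, breaks. Moreover your diagnosis of where it might break is off: the second derivative of $\binom{i}{s-1}$ is \emph{not} the issue, since the falling factorial $(i)_{s-1}=i(i-1)\cdots(i-s+2)$ is a product of nonnegative increasing linear factors on $[s-2,\infty)$ and hence convex there (the paper uses exactly this fact in the proof of Claim 5.2). The real obstruction is the linear factor $L(i)=\frac{(1-(s-1)\beta)(i-s+1)}{s}+\frac{\beta(k-s+2)(i-s+1-\alpha)}{k-s+1-\alpha}$ multiplying $\binom{i}{s-1}$ in $f$: whenever $\alpha>0$ it is negative at $i=s-1$ (its value there is $-\alpha\beta(k-s+2)/(k-s+1-\alpha)$), and then $f''\propto \binom{i}{s-1}''L+2\binom{i}{s-1}'L'$ can be negative. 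A concrete instance: $s=3$, $k=10$, $\alpha=7$ (realised e.g.\ by $t=2$, $m=18$), so $\beta=1/37$ and $L(i)=(62i-313)/111$; at $i=2$ one gets $f''(2)\propto L(2)+3L'(2)=-3/111<0$. So $f(i)-i$ is not convex on $[s-1,k-1]$ for admissible parameters, checking $f'$ only at $i=k-1$ does not suffice, and your fallback (``term-by-term / factored comparison'') is only a sketch, not an argument.

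What is actually needed -- and what the paper does -- is to bound the derivative \emph{uniformly}: show $f'(i)\leq 1$ for all $i\in[s-1,k-1]$, which immediately makes $f(i)-i$ nonincreasing and reduces the claim to $f(k-1)=k-1$. The paper achieves this by writing $\frac{d}{di}(i)_{s-1}=(i)_{s-1}\sum_{j=0}^{s-2}\frac{1}{i-j}$, using $\sum_{j=0}^{s-2}\frac{1}{i-j}\leq\frac{s-1}{i-s+2}$, substituting for $\beta$, observing that the resulting upper bound on $f'(i)$ is decreasing in $\alpha$ (so one may set $\alpha=0$), and then a short chain of estimates gives $f'(i)<\frac{(i)_{s-1}}{(k-1)_{s-1}}\leq 1$. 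Some such uniform derivative bound (or an equally explicit sign analysis valid on the subinterval where $L<0$) is the missing ingredient your proposal would have to supply.
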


\begin{proof}
Note that substituting $i = k-1$ into $f(i)$ and rearranging we get
\begin{align*}
f(k-1) &= k-1 + \mfrac{s-1}{k} + \beta\left( \mfrac{(k-s+1)(k-s)}{ks}\left(\mfrac{s(k-s+2)(k-s-\alpha)}{(k-s)(k-s+1-\alpha)}-s+1 \right) - \mfrac{k+1}{s}\right).
\end{align*}
Using the definition of $\beta$, one can check that $\beta$ satisfies the equation
\[\mfrac{s-1}{k} + \beta\left( \mfrac{(k-s+1)(k-s)}{ks}\left(\mfrac{s(k-s+2)(k-s-\alpha)}{(k-s)(k-s+1-\alpha)}-s+1 \right) - \mfrac{k+1}{s}\right) = 0\]
which implies $f(k - 1) = k - 1$.

We now move on to show that $f(i) \geq i$ for each $i \in \{s-1,\ldots,k-2\}$. Since $f(k-1)=k-1$, it will suffice to show that $f'(i) \leq 1$ for $s-1 \leq i \leq k-1$, where $i$ is viewed as a real variable. For integers $a$ and $b$ with $b \leq a$ we use $(a)_b$ to denote the falling factorial $a(a-1)\cdots(a-b+1)$. Noting that the derivative of $(i)_{s-1}$ with respect to $i$ is $\sum_{j=0}^{s-2}\frac{1}{i-j}(i)_{s-1}$  we have,

\begin{align}
f'(i)
&= \mfrac{(i)_{s-1}}{(k)_{s-1}} \biggl(\left(\mfrac{1-(s-1)\beta}{s}+\mfrac{\beta(k-s+2)}{k-s+1-\alpha}\right) \ + \notag \\[2mm]
& \qquad \left(\mfrac{(1-(s-1)\beta)(i-s+1)}{s} + \mfrac{\beta (k-s+2)(i-s+1-\alpha)}{(k-s+1-\alpha)}\right)\sum_{j=0}^{s-2}\mfrac{1}{i-j}\biggr) \notag \\[2mm]
&\leq \mfrac{(i)_{s-1}}{(k)_{s-1}}  \biggl(\mfrac{1-(s-1)\beta}{s}+\mfrac{\beta(k-s+2)}{k-s+1-\alpha} \ + \notag \\[2mm]
& \qquad \left(\mfrac{(1-(s-1)\beta)(i-s+1)}{s} + \mfrac{\beta (k-s+2)(i-s+1-\alpha)}{(k-s+1-\alpha)}\right)\mfrac{s-1}{i-s+2}\biggr) \notag \\[2mm]
&= \mfrac{(i)_{s-1}}{(k)_{s-1}}  \biggl(\mfrac{k(k+1)\bigl(s(i-s+1)+1\bigr)-\alpha(s-1)\bigl((k-i)s^2+s(2i-k-1)+2\bigr)} {s(i-s+2)\bigl((k+1)(k-s+1)-\alpha(s-1)\bigr)}\biggr) \label{E:fDerivBound}
\end{align}
where the inequality was obtained by using $\sum_{j=0}^{s-2}\frac{1}{i-j} \leq \sum_{j=0}^{s-2}\frac{1}{i-s+2} = \frac{s-1}{i-s+2}$ and the final equality was obtained by substituting for $\beta$ and a computer aided simplification.
The derivative of the right hand side of \eqref{E:fDerivBound} with respect to $\alpha$ is
\[-\mfrac{(k+1)(k-s+2)(s-1)^2((k-i)s-1)}{s(i-s+2)((k+1)(k-s+1)-\alpha(s-1))^2}\cdot\mfrac{(i)_{s-1}}{(k)_{s-1}}\]
which is negative and hence the expression is decreasing in $\alpha$. So we may substitute $\alpha=0$ into \eqref{E:fDerivBound} and rearrange to obtain
\[
f'(i) \leq \mfrac{ks(i-s+2)-k(s-1)} {s(i-s+2)(k-s+1)}\cdot\mfrac{(i)_{s-1}}{(k)_{s-1}}
< \mfrac{k}{k-s+1}\cdot\mfrac{(i)_{s-1}}{(k)_{s-1}} = \mfrac{(i)_{s-1}}{(k-1)_{s-1}} \leq 1.\qedhere\]
\end{proof}

\begin{claim}\label{Cl:g}
Let $g$ be as defined in the proof of Theorem~$\ref{T:main}$. Then $g(k)=k$, $g(k+1)=k+1$ and $g(i) \geq i$ for each $i \in \{k+2,\ldots,m\}$.
\end{claim}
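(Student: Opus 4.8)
The plan is to mirror the proof of Claim~\ref{Cl:f}, but since here we need $g(i)\geq i$ to the \emph{right} of the points where $g$ meets the identity, the ``derivative at most $1$'' argument is replaced by a proof that $g$ is convex on $[k,m]$.

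First I would verify the two equalities $g(k)=k$ and $g(k+1)=k+1$ by direct substitution. Putting $i=k$ makes $\binom{i}{s-1}/\binom{k}{s-1}=1$, while putting $i=k+1$ makes this ratio equal to $\frac{k+1}{k-s+2}$ and $i-s+1=k-s+2$; in each case the $\beta$-free part of the resulting expression collapses to $k$ (respectively $k+1$), and the coefficient of $\beta$ simplifies to $0$. In particular these two identities hold for \emph{every} value of $\beta$ (it is $f(k-1)=k-1$ from Claim~\ref{Cl:f} that forces the specific choice of $\beta$), so this step requires no computation with $\beta$ itself.

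The substantive step is the convexity of $g$ on $[k,m]$. Using $\binom{i}{s-1}(i-s+1)=s\binom{i}{s}$ one rewrites
\[
g(i)=\mfrac{(k+1)(s-1-\beta)}{s}+\mbinom{k}{s-1}^{-1}\left(\bigl(1-(s-1)\beta\bigr)\mbinom{i}{s}+\beta(k-s+2)\mbinom{i}{s-1}\right).
\]
On $[k,m]$ each of $\binom{i}{s}$ and $\binom{i}{s-1}$ is a product of positive increasing affine functions of $i$ (the factors $i,i-1,\ldots,i-s+1$ are each at least $k-s+1\geq 1$), and hence is convex there; this follows either from a direct computation of the second derivative or from the fact that a product of positive increasing convex functions is convex. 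Since $\binom{k}{s-1}>0$, since $\beta(k-s+2)\geq 0$ (as $\beta>0$ and $k\geq s$), and since $1-(s-1)\beta\geq 0$ precisely because $k\geq s^2-2s$ together with the bound $\beta\leq\frac{s-1}{k+1}$ already established in the proof of Theorem~\ref{T:main} (this is the same inequality that made $B\geq 0$ there), $g$ is a nonnegative linear combination of convex functions plus a constant, hence convex on $[k,m]$.

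Finally, from convexity of $g$ on $[k,m]$ together with $g(k)=k$ and $g(k+1)=k+1$, the slope characterization of convexity applied to $k<k+1<i$ gives $\frac{g(i)-g(k+1)}{i-k-1}\geq\frac{g(k+1)-g(k)}{1}=1$ for every $i\in(k+1,m]$, so $g(i)\geq(k+1)+(i-k-1)=i$; in particular $g(i)\geq i$ for all integers $i\in\{k+2,\ldots,m\}$, which together with the two equalities proves the claim. The main obstacle is the convexity step, and the only genuinely delicate point within it is the nonnegativity of $1-(s-1)\beta$, which is exactly where the hypothesis $k\geq s^2-2s$ enters; everything else is bookkeeping.
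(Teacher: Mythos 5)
Your proposal is correct and follows essentially the same route as the paper: verify $g(k)=k$ and $g(k+1)=k+1$ by substitution, establish convexity of $g$ on $[k,m]$ using $1-(s-1)\beta\geq 0$ (which is exactly where $k\geq s^2-2s$ enters, just as in the paper), and then conclude $g(i)\geq i$ for $i\geq k+2$. The only cosmetic differences are that you decompose $g$ as a nonnegative combination of $\binom{i}{s}$ and $\binom{i}{s-1}$ rather than as a constant plus $\binom{i}{s-1}$ times a nondecreasing positive linear factor, and you finish with the chord-slope inequality instead of the paper's ``a convex function has at most two zeroes'' remark (your version is, if anything, slightly tighter).
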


\begin{proof}
Note that $(s-1)\beta \leq 1$ since $\beta \leq \frac{s-1}{k+1}$ and  $k \geq s^2-2s$. Thus it can be seen from the definition of $g$ that it is a constant plus the product of $\binom{i}{s-1}$ and a nondecreasing positive linear function of $i$. So $g$ is convex, since $\binom{i}{s-1}$ is a nondecreasing positive convex function of $i$ for $i \geq s-2$ and a product of two nondecreasing positive convex functions is convex. Since a convex function has at most two zeroes, to prove the claim it suffices to show that $g(k)=k$ and $g(k+1)=k+1$. This can be established by simplifying $g(i)$ as follows and making the appropriate substitutions for $i$.
\[g(i) = \mfrac{(k+1)(s-1-\beta)}{s} + \mfrac{(i)_{s-1}}{s(k)_{s-1}}\left( (1-(s-1)\beta)i + \beta(sk+1)-s+1\right).\qedhere\]
\end{proof}


\section{Conclusion\label{S:conclusion}}

Here we generalised the methods used in \cite{CheHorMam} to find new upper bounds on Zarankiewicz numbers $z(n,m;s,t)$ for general $s$. In \cite{CheHorMam} it was further shown that, for $s=2$, either the new upper bound or Roman's bound was in fact achievable in the vast majority of cases in which $n=\Theta(tm^2)$. It would be interesting to see if analogous results could be proved for $s>2$, but we suspect that attempting to employ similar arguments to those used in \cite{CheHorMam} would become intractably complicated in the general case. One could also attempt to determine whether some of our improved lower bounds for small parameter sets are achievable. Computational methods like those of \cite{Tan} could be employed here, or perhaps constructions based on finite geometric objects, like those in \cite{DamHegSzo}.

It is also interesting to note in Table~\ref{table:s-1 vs all} that Theorem~\ref{T:main} which uses only a single constraint of the form \eqref{E:LP3} is often outperformed by $\mathcal{E}^*(m,n;s,t)$ which uses all the \eqref{E:LP3} constraints simultaneously. One could attempt to find other closed form bounds that use more than one of these constraints together, but this may well come at the expense of an even more unwieldy expression.

\bigskip
\noindent\textbf{Acknowledgments.}
The second author was supported by an Australian Government Research Training Program (RTP) Scholarship. The third author was supported by Australian Research Council grants DP220102212 and
DP240101048.

\pagebreak

\appendix

\section{Tables of improved upper bounds}\label{A:tables}

Tables~\ref{table-3-3}-\ref{table-5-5} in this appendix show where Theorem~\ref{T:main} or optimal values from the linear program $\mathcal{E}(m,n;s,t)$ improve on the best previously known bound for $(s,t) = (3,3)$, $(3,4)$, $(3,5)$, $(4,4)$, $(4,5)$, and $(5,5)$ respectively. In every case where we make an improvement, this previous bound was the Roman bound. Bold indicates an improvement of 2 on the Roman bound. Bold and underlined indicates an improvement of 3. An asterisk indicates that $\mathcal{E}(m,n;s,t)$ was needed to obtain the improved bound; all other improvements are implied by Theorem~\ref{T:main}.

\begin{table}[H]
\begin{center}
\scalebox{0.9}{
\begin{tabular}{r|rrrrrrr}
\diagbox[width=12mm,height=9mm]{$m$}{$n$} &   17  &  18  &  19  &  20  &  21  &  22  &  23 \\
\hline
10 & & & & & & 111  & 115  \\
11 & & & 108  & 112  & 116  & &  \\
12 & & & & & & &   \\
13  & 116 & 121 & 125 & 130 & 135 & &  \\
14  & 124  & 129 & 135 & 140 & 145 & 150 & \\
15   & {\bf 132} & 138 & 143 & 149 & 154  & & 165   \\
16   & 141  & {\bf 146}  & {\bf 152}  & {\bf 158}  & 164  & 169  & 175   \\
\end{tabular}}
\caption{Improved upper bounds on $z(m,n;3,3)$.} 
\label{table-3-3}
\end{center}
\end{table}

\begin{table}[H]
\begin{center}
\scalebox{0.9}{
\begin{tabular}{r|rrrrrrrrrrrrrrrrr}
\diagbox[width=12mm,height=9mm]{$m$}{$n$} &  7  &  8  &  9  &  10  &  11  &  12  &  13  &  14  &  15  &  16  &  17  &  18  &  19  &  20  &  21  &  22  &  23 \\
\hline
5 & 27 & 30 & 33 & & & & & & & & & & & & & & \\
6 &  &   &  & & & & & & & & & & & & & &  \\
7  &  &   &  & & & & & & & & & & & & & &  \\
8 &    &  47 & & & & & & & & & & & & & & & \\
9 &  &   &   &   &   &  & 76 & 80 & 84 & 88 & & & & & & & \\
10 &  &   &   &   69 & & & & & & & & 107$^* $ & & & & & \\
11 &  &   &   &   &  & & &  97 & 102 & & 111 & 116 & & & 130 & & 139 \\
12 &  &   &  & & & & & & & & & & & & & &  \\
13 &  &   &   &   &   &   &   &    113 & 119 & 125 & 130 & & & & & 157 & 162 \\
14 &  &   &   &   &   &   &   &    122 & 128 & 134 & 140 & 146 & 152 & & 163$^* $ & & \\
15 &  &   &   &   &   &   &   &   &   &   143 & 149 & & & & & & 185 \\
16  &   &   &   &   &   &   &   &   &   &  152 & 159 & & & 179$^* $ & 185$^* $ & & \\
\end{tabular}}
\caption{Improved upper bounds on $z(m,n;3,4)$.}
\label{table-3-4}
\end{center}
\end{table}
\vspace{5mm}

\begin{table}[H]
\begin{center}
\scalebox{0.9}{
\begin{tabular}{r|rrrrrrrrrrrrrrrr}
\diagbox[width=12mm,height=9mm]{$m$}{$n$} &   8  &  9  &  10  &  11  &  12  &  13  &  14  &  15  &  16  &  17  &  18  &  19  &  20  &  21  &  22  &  23 \\
\hline
6 & 39 & & & & & & & & & & & & 79$^* $ & & & \\
7   & & & & & 61 & 65 & 69 & 72 & & & & & & & & \\
8    &  & & & & & & & & & & & & & & & \\
9    &   &  63 & 68 & & & & & & & & & & & & & \\
10   &   &   &  75 & & & 91 & 96 & 101 & & & & & & & & \\
11    &   &   &   &  & & & & & & & & & 136 & 141 & & \\
12    &   &   &   &   &  & & 114 & 120 & 126 & & & & & & &\\
13   &   &   &   &   &   &  & & & & & & & & 165 & & 176 \\
14   &   &   &   &   &   &   &  & & 146 & & 159 & 165$^* $ & & & & \\
15  &   &   &   &   &   &   &   &  149 & 156 & & & & & 189 & {\bf 195} & 202 \\
16   &   &   &   &   &   &   &   &   &  166 & & & & & & & \\
\end{tabular}}
\caption{Improved upper bounds on $z(m,n;3,5)$.}
\label{table-3-5}
\end{center}
\end{table}
\vspace{5mm}

\begin{table}[H]
\begin{center}
\scalebox{0.9}{
\begin{tabular}{r|rrrrrrrrrrrrrrrrrrrr}
\diagbox[width=12mm,height=9mm]{$m$}{$n$} &      14  &  15  &  16  &  17  &  18  &  19  &  20  &  21  &  22  &  23 \\
\hline
10 &   & 107 & 113$^* $  & 119$^* $  &  {\bf 124}$^* $ & 130$^* $  & 135$^* $  & 140$^* $  & 146$^* $  & 151$^* $  \\
11 &  {\bf 110} & 117 & & & & & & & & \\
12 &   120 & & & & 147 & 153 & {\bf 159} & {\bf 166} & {\bf 172} & {\bf 178} \\
13 &  & 137 & & 152$^* $  & & & & & & \\
14 &    {\bf 138} & {\bf 146} & 154 & 162 & & 177 & 184 & {\bf 191} & {\bf 198} & {\bf 205} \\
15 &    &  156 & 165 & & & & 197 & 205 & 212 & 220 \\
16 &       &   &  {\bf 175}$^*$ & 184 & 193 & 201 & 209 & 217 & {\bf 225} & {\bf 233} \\
\end{tabular}}
\caption{Improved upper bounds on $z(m,n;4,4)$.}
\label{table-4-4}
\end{center}
\end{table}
\vspace{5mm}

\begin{table}[H]
\begin{center}
\scalebox{0.9}{
\begin{tabular}{r|rrrrrrrrrrrrrrr}
\diagbox[width=12mm,height=9mm]{$m$}{$n$} &    9  &  10  &  11  &  12  &  13  &  14  &  15  &  16  &  17  &  18  &  19  &  20  &  21  &  22  &  23 \\
\hline
7  & 53$^* $ & 58$^* $ & & 67$^* $ & & & & & & & & & & & \\
8   & & & & & & & & 97$^* $ & 102$^* $ & 107$^* $ & 112$^* $ & 117$^* $ & 121$^* $ & 126$^* $ & \\
9   &  & & & & & & & & & & & & & & \\
10  &   &  81 & 88 & {\bf 94} & 101 & 107 & 113 & & & & & & & & 161$^* $ \\
11   &   &   &  96 & 103 & 110 & 117 & & & 137 & {\bf 143} & 150 & 156 & 162 & 168 & \\
12    &   &   &   &  & 120 & & 135$^* $ & 142$^* $ & 149$^* $ & & & & & & \\
13    &   &   &   &   &  {\bf 129} & {\bf 137} & 145 & 153 & & 168 & 175 & {\bf 182} & 190 & {\bf 197} & {\bf 204} \\
14    &   &   &   &   &   &  & 156 & & & & {\bf 188} & 196 & 204 & 211 & 219 \\
15   &   &   &  167 & 176 & & 193 & 201 & & & 226 & 234 \\
16  &   &   &   &   &   &   &   &  187 & & 205 & 214 & {\bf 222} & {\bf 231} & {\bf \underline{239}} & {\bf 248} \\
\end{tabular}}
\caption{Improved upper bounds on $z(m,n;4,5)$.}
\label{table-4-5}
\end{center}
\end{table}
\vspace{5mm}

\begin{table}[H]
\begin{center}
\scalebox{0.9}{
\begin{tabular}{r|rrrrrrrrrrrrrr}
\diagbox[width=12mm,height=9mm]{$m$}{$n$} &   10  &  11  &  12  &  13  &  14  &  15  &  16  &  17  &  18  &  19  &  20  &  21  &  22  &  23 \\
\hline
8 & 69$^* $ & 75$^* $ & 81$^* $ & & 92$^* $ & & & & & & & & & \\
9 & & & & & & & & & & 135$^* $ & 141$^* $ & 147$^* $ & {\bf 153}$^* $ & {\bf 159}$^* $\\
10  &  & & & & & & & & 143$^* $ & 149$^* $ & & & & \\
11   &   &  & 109$^* $ & 117$^* $ & {\bf 124}$^* $ & {\bf 132}$^* $ & 140$^* $ & 147$^* $ & 155$^* $ & 162$^* $ & & & & \\
12  &   &   &  119$^* $ & {\bf 127}$^* $ & {\bf135}$^* $ & 144$^* $ & 152$^* $ & 160$^* $ & 168$^* $ & 176$^* $ & & 192$^* $ & 200$^* $ & {\bf 207}$^* $ \\
13   &   &   &   &  & & & & 173$^* $ & & & 199 & & & \\
14  &   &   &   &   &  157$^* $ & {\bf 166}$^* $ & 176$^* $ & {\bf 185}$^* $ & {\bf 194}$^* $ & 204$^* $ & 213$^* $ & & & \\
15    &   &   &   &   &   &  {\bf 178}$^* $ & {\bf 188}$^* $ & {\bf 197}$^* $ & {\bf 207}$^* $ & 217$^* $ & 227$^* $ & 237$^* $ & & 256$^* $ \\
16   &   &   &   &   &   &   &  200$^* $ & 211$^* $ & 221$^* $ & & 242$^* $ & 252$^* $ & 263$^* $ & \\
\end{tabular}}
\caption{Improved upper bounds on $z(m,n;5,5)$.}
\label{table-5-5}
\end{center}
\end{table}

\section{Plots of improvements on Roman's bound}\label{A:plots}

In this appendix we give plots that show the improvement (if any) that Theorem~\ref{T:main} provides over the bound of Roman given by Theorem~\ref{T:Rom}. In each plot the values of $s$, $t$ and $m$ are fixed, the value of $n$ is plotted on the horizontal axis and the improvement on the vertical axis. We consider $(s,t) \in \{(3,3),(3,4),(3,5),(4,4),(4,5),(5,5)\}$ and $m \in \{21,22,23,24\}$. We treat the bounds as real functions of a real variable $n$ and do not take the floor of either. For each plot, there are no improvements for values of $n$ larger than those pictured. We indicate the Roman points with light vertical lines.

\begin{center}
\begin{tabular}{ccc}
  $m$ & $(s,t)=(3,3)$ & $(s,t)=(3,4)$ \\[5mm]
  \raisebox{2.5cm}{21} & \includegraphics[width=7.5cm]{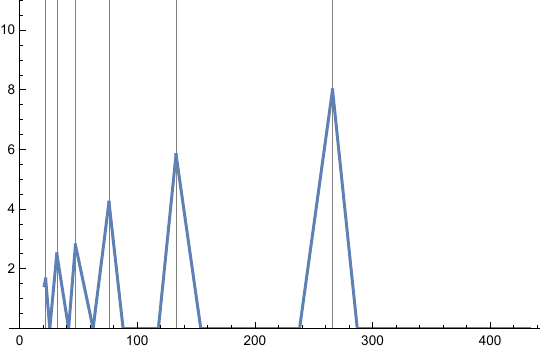} & \includegraphics[width=7.5cm]{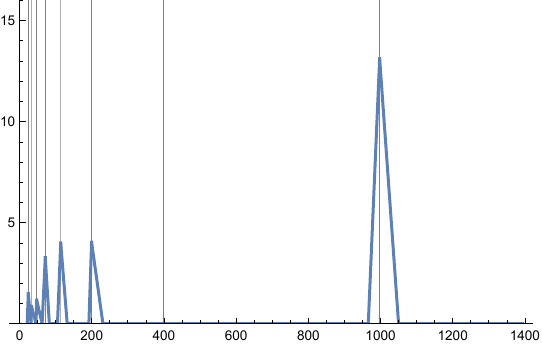} \\[5mm]
  \raisebox{2.5cm}{22} & \includegraphics[width=7.5cm]{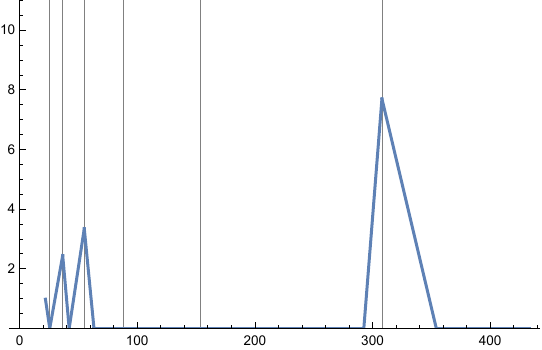} & \includegraphics[width=7.5cm]{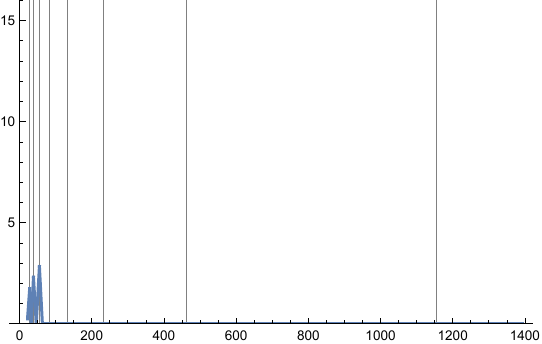} \\[5mm]
  \raisebox{2.5cm}{23} & \includegraphics[width=7.5cm]{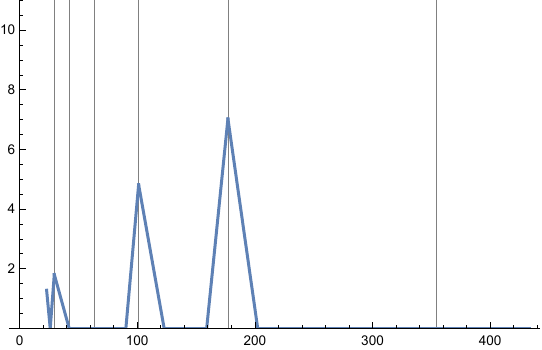} & \includegraphics[width=7.5cm]{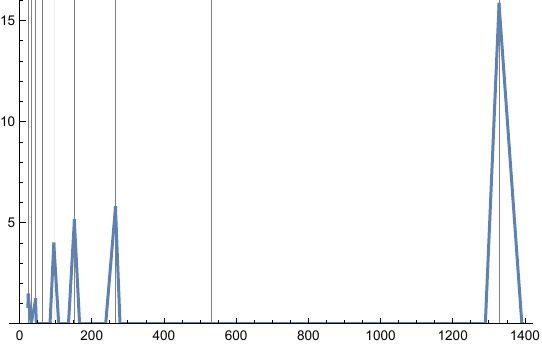} \\[5mm]
  \raisebox{2.5cm}{24} & \includegraphics[width=7.5cm]{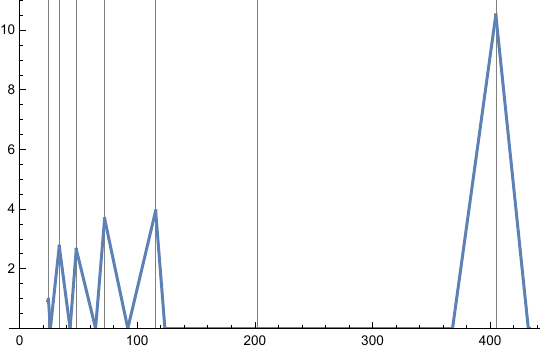} & \includegraphics[width=7.5cm]{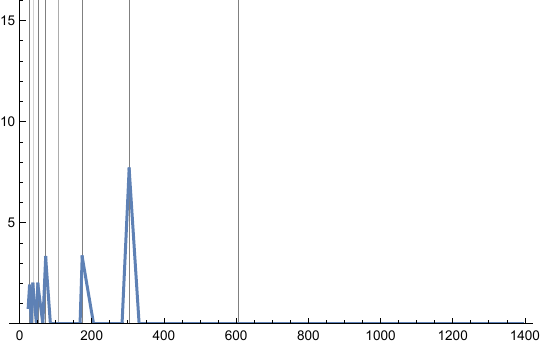}
\end{tabular}
\end{center}

\begin{center}
\begin{tabular}{ccc}
  $m$ & $(s,t)=(3,5)$ & $(s,t)=(4,4)$ \\[5mm]
  \raisebox{2.5cm}{21} & \includegraphics[width=7.5cm]{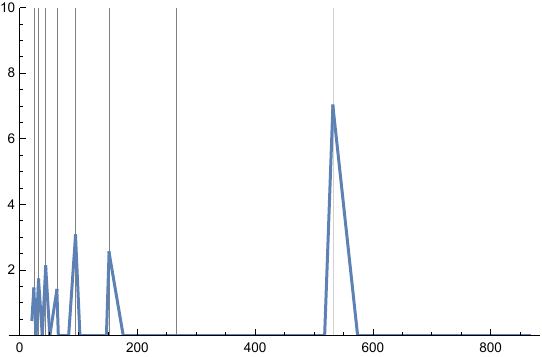} & \includegraphics[width=7.5cm]{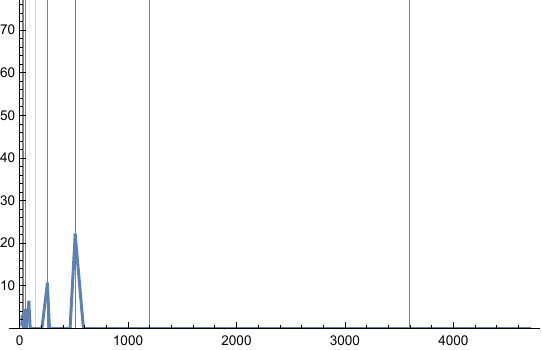} \\[5mm]
  \raisebox{2.5cm}{22} & \includegraphics[width=7.5cm]{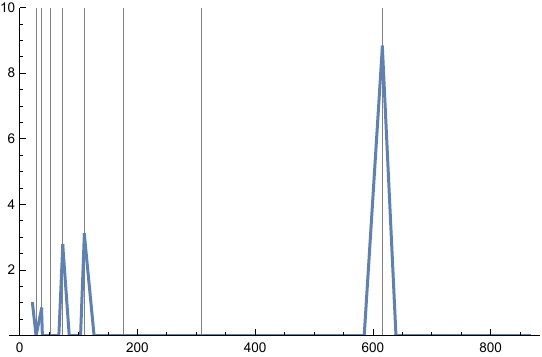} & \includegraphics[width=7.5cm]{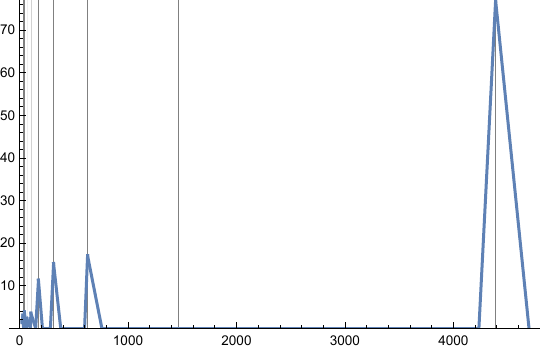} \\[5mm]
  \raisebox{2.5cm}{23} & \includegraphics[width=7.5cm]{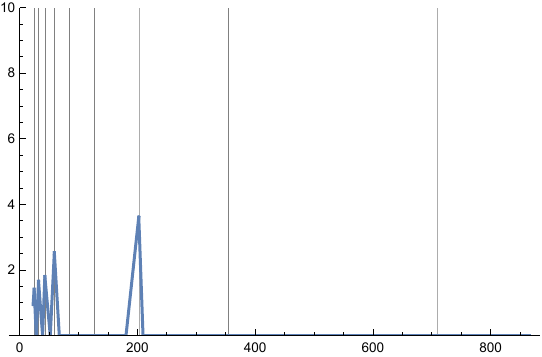} & \includegraphics[width=7.5cm]{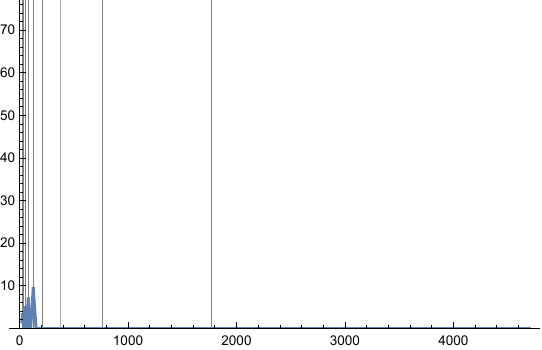} \\[5mm]
  \raisebox{2.5cm}{24} & \includegraphics[width=7.5cm]{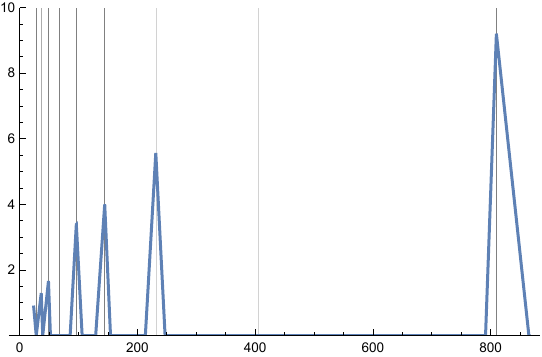} & \includegraphics[width=7.5cm]{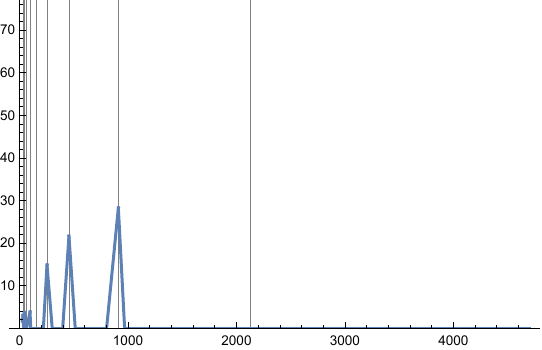}
\end{tabular}
\end{center}

\begin{center}
\begin{tabular}{ccc}
  $m$ & $(s,t)=(4,5)$ & $(s,t)=(5,5)$ \\[5mm]
  \raisebox{2.5cm}{21} & \includegraphics[width=7.5cm]{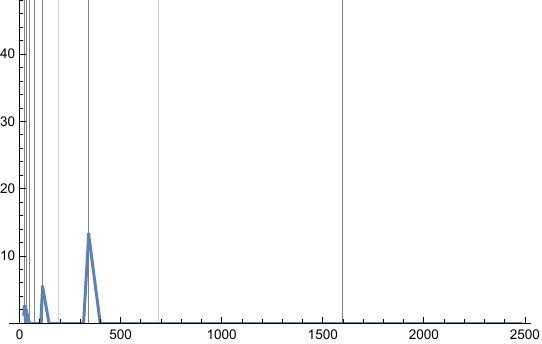} & \includegraphics[width=7.5cm]{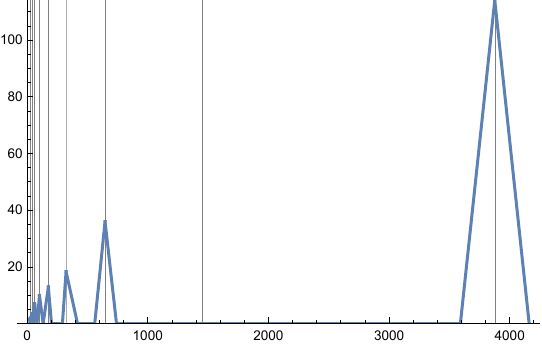} \\[5mm]
  \raisebox{2.5cm}{22} & \includegraphics[width=7.5cm]{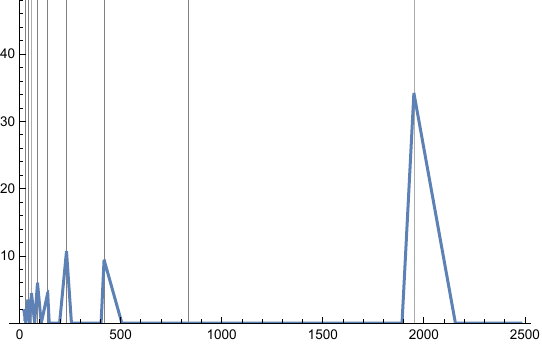} & \includegraphics[width=7.5cm]{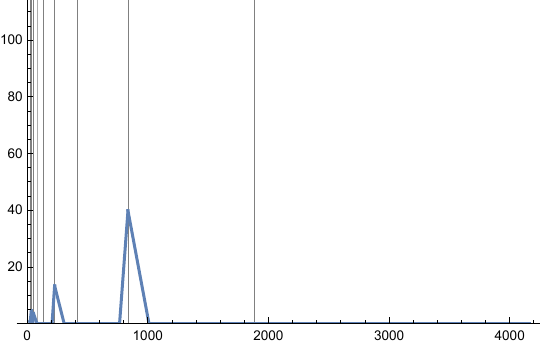} \\[5mm]
  \raisebox{2.5cm}{23} & \includegraphics[width=7.5cm]{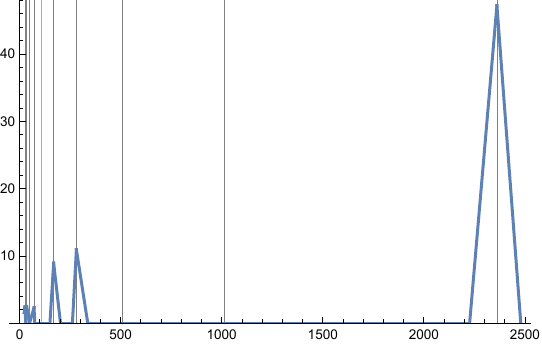} & \includegraphics[width=7.5cm]{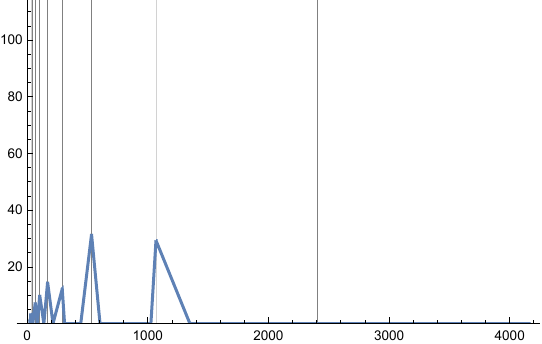} \\[5mm]
  \raisebox{2.5cm}{24} & \includegraphics[width=7.5cm]{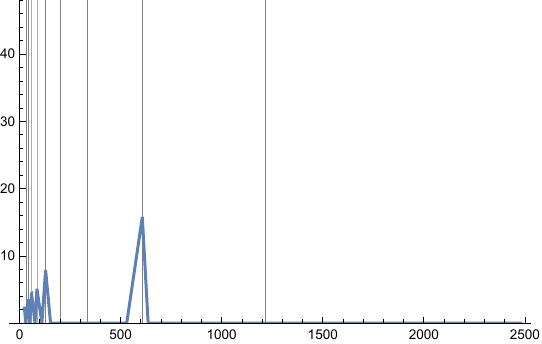} & \includegraphics[width=7.5cm]{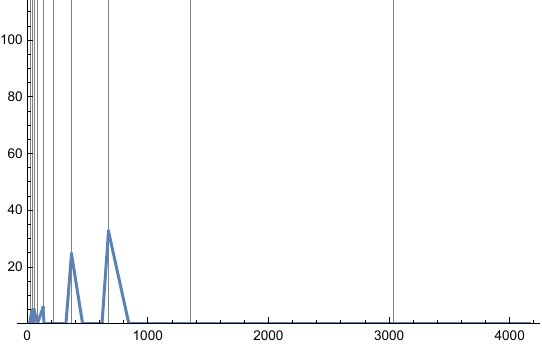}
\end{tabular}
\end{center}

\end{document}